\theoremstyle{plain}
\newtheorem{theorem}{Theorem}[section]
\newtheorem*{claim*}{Claim}
\newtheorem*{proposition*}{Proposition}
\newtheorem*{fact*}{Fact}
\newtheorem{conjecture}[theorem]{Conjecture}
\newtheorem*{conjecture*}{Conjecture}
\newtheorem{corollary}[theorem]{Corollary}
\newtheorem{lemma}[theorem]{Lemma}
\newtheorem*{lemma*}{Lemma}
\newtheorem*{question*}{Question}
\theoremstyle{definition}\newtheorem{remark}[theorem]{Remark}
\theoremstyle{definition}\newtheorem*{remark*}{Remark}
\theoremstyle{definition}\newtheorem{definition}[theorem]{Definition}
\theoremstyle{definition}\newtheorem*{definition*}{Definition}
\theoremstyle{definition}
\theoremstyle{definition}
\theoremstyle{definition}
\theoremstyle{definition}\newtheorem*{example*}{Example}
\newcommand\floor[1]{\lfloor #1 \rfloor} 
\newcommand\ceil[1]{\lceil #1 \rceil} 
\renewcommand{\phi}{\varphi} 
\renewcommand{\epsilon}{\varepsilon} 
\def\N{\mathbb{N}} 
\def\R{\mathbb{R}} 
\def\S{\mathcal{S}}
\title{Bounding the dimension of exceptional sets for orthogonal projections}
\author[P. Cholak]{Peter Cholak}
\address{University of Notre Dame}  
\email{Peter.Cholak.1@nd.edu}
\author[M. Cs\"ornyei]{Marianna Cs\"ornyei}
\address{University of Chicago}  
\email{csornyei@math.uchicago.edu}
\author[N. Lutz]{Neil Lutz}
\address{Swarthmore College}
\email{nlutz1@swarthmore.edu}
\author[P. Lutz]{Patrick Lutz}
\address{UC Berkeley}
\email{pglutz@berkeley.edu}
\author[E. Mayordomo]{Elvira Mayordomo}
\address{Universidad de Zaragoza}
\email{elvira@unizar.es}
\author[D. Stull]{Donald M. Stull}
\address{University of Chicago}
\email{dmstull@uchicago.edu}
\thanks{The project was begun at and was partially made possible by a SQuaRE at the American Institute of Mathematics. The authors thank AIM for providing a supportive and mathematically rich environment.  We also want to acknowledge support by Notre Dame Global via an International Research Travel Grant. We thank an anonymous reviewer for helpful comments.}
\subjclass{28A75, 28A78}
\begin{document}

\begin{abstract}
    It is well known that if $A\subseteq\R^n$ is an analytic set of Hausdorff dimension $a$, then $\dim_H(\pi_VA)=\min\{a,k\}$ for a.e.\ $V\in G(n,k)$, where $G(n,k)$ denotes the set of all $k$-dimensional subspaces of $\R^n$ and   $\pi_V$ is the orthogonal projection of $A$ onto $V$. In this paper we study how large the exceptional set
    \begin{equation*}
        \{V\in G(n,k) \mid \dim_H(\pi_V A) < s\}
    \end{equation*}
    can be for a given $s\le\min\{a,k\}.$ We improve previously known estimates on the dimension of the exceptional set, and we show that our estimates are sharp for $k=1$ and for $k=n-1$.  Hence we completely resolve this question for $n=3$. 
\end{abstract}

\maketitle

\section{Introduction}
Let $A\subseteq\R^2$ be an analytic set of Hausdorff dimension $a$. For any $e\in \mathcal{S}^1$, let $\pi_e A$ denote the orthogonal projection of $A$ onto the line through the origin in the direction of $e$. In~\cite{Marstrand54}, Marstrand proved that, for almost every $e \in \S^1$,
\begin{equation*}
    \dim_H(\pi_e A) = \min\{\dim_H(A), 1\}.
\end{equation*}
Given the almost everywhere nature of Marstrand's theorem, it is natural to investigate the size of the exceptional set of directions. That is, given $A\subseteq\R^2$ and $s \leq \min\{\dim_H(A), 1\}$, to bound the dimension of the exceptional set of $A$,
\begin{equation*}
    E_s(A) = \{e\in\mathcal{S}^1\mid \dim_H(\pi_e A) < s\}.
\end{equation*}
Kaufman \cite{Kaufman68} gave the first non-trivial upper bound, showing that $\dim_H(E_s(A)) \leq s$. Oberlin \cite{Oberlin12} conjectured that, for any analytic set $A$ of Hausdorff dimension $a$ and any $s \leq \min\{a, 1\}$, $\dim_H(E_s(A))\leq \max\{2s-a, 0\}$. In a recent breakthrough, Ren and Wang \cite{ren2023} proved Oberlin's conjecture. 
\begin{theorem}[\cite{ren2023}]\label{thm:RenWang}
    Let $A\subseteq\R^2$ be an analytic set, $a=\dim_H(A)$, and $0 < s \leq \min\{a,1\}$. Then
    \begin{equation*}
        \dim_H(\{e\in\mathcal{S}^1\mid \dim_H(\pi_e A) < s\}) \leq \max\{2s-a, 0\}.
    \end{equation*}        
\end{theorem}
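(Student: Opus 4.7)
The plan is to reduce Theorem \ref{thm:RenWang} to a sharp dimension estimate for $(s, t)$-Furstenberg sets in $\R^2$, via the classical point-line duality between exceptional projection sets and Furstenberg-type configurations. Recall that an $(s, t)$-Furstenberg set is a planar set $F$ admitting a family of lines $L$ with $\dim_H L \geq t$ and $\dim_H(F \cap \ell) \geq s$ for every $\ell \in L$. Under the projective duality that interchanges points and lines, an upper bound on $\dim_H E_s(A)$ translates into a lower bound on the dimension of an associated Furstenberg configuration, and Oberlin's inequality $\dim_H E_s(A) \leq 2s - a$ corresponds, in the critical regime, to the sharp Furstenberg-set bound conjectured in its strongest form.

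First I would carry out the reduction. Fix $\epsilon > 0$ and assume for contradiction that $t \defeq \dim_H E_s(A) > 2s - a + \epsilon$. Place Frostman measures of exponents $a - \epsilon$ and $t - \epsilon$ on $A$ and $E_s(A)$ respectively, and pass to a scale $\delta > 0$. The hypothesis $\dim_H \pi_e A < s$ for $e$ in an essentially $(\delta, t - \epsilon)$-set of directions yields, for each such $e$, a cover of $A$ by $\lesssim \delta^{-s - \epsilon}$ tubes of width $\delta$ in direction $e$. Dualizing tubes to points and incidences to lines produces a discretized Furstenberg configuration whose $\delta$-covering number is bounded above by that of $A$, i.e.\ at most $\delta^{-a - O(\epsilon)}$. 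On the other hand, the sharp Furstenberg-set lower bound forces this same covering number to exceed the corresponding lower exponent precisely when $t > 2s - a$, yielding the required contradiction; letting $\epsilon \to 0$ completes the reduction.

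The main obstacle --- and the substance of Ren and Wang's breakthrough --- is the sharp lower bound on the dimension of $(s, t)$-Furstenberg sets in the critical parameter range. I would approach it via a high-low frequency decomposition of the characteristic function of the union of $\delta$-tubes: the low-frequency part is controlled by an $L^2$ orthogonality/energy argument refining Kaufman's incidence estimate, while the high-frequency part is controlled by a broad-narrow dichotomy. In the broad case, tubes passing through a common ball come from pairwise transverse directions, and a multilinear Kakeya-type inequality in the spirit of Guth supplies the needed concentration estimate; in the narrow case the tubes cluster inside a thin angular window, and after rescaling one induces on scales, reducing the estimate at scale $\delta$ to the same estimate at a coarser scale with matching parameters. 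Carefully balancing these contributions and iterating the induction to absorb the logarithmic losses yields the required bound and closes the argument.
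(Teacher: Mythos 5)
There is a genuine gap, and it is worth first noting that this paper does not prove Theorem~\ref{thm:RenWang} at all: the result is imported from Ren and Wang \cite{ren2023} and used only as the base case of the inductive arguments in the upper-bound section, so the comparison here is between your sketch and the content of \cite{ren2023} itself. Your reduction via point--line duality to Furstenberg sets is indeed the standard route by which the exceptional-set estimate follows from a Furstenberg-set bound, but even this part is garbled in the bookkeeping. If $\dim_H \pi_e A < s$ for all $e$ in a $t$-dimensional set $E$, then for each $e$ the set $A$ is covered by $\lesssim \delta^{-s}$ tubes orthogonal to $e$; dualizing, the resulting point configuration has covering number $\lesssim \delta^{-(s+t)}$ (it fibers over $E$ with $\delta^{-s}$ points per direction), \emph{not} ``bounded above by that of $A$, i.e.\ at most $\delta^{-a-O(\epsilon)}$'' as you write. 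The lines dual to the points of $A$ form an $a$-dimensional family, each meeting the configuration in a $t$-dimensional set, so what arises is a $(t,a)$-Furstenberg configuration, and the contradiction comes from comparing $\delta^{-(s+t)}$ with the sharp lower bound $\min\{t+a,\ (3t+a)/2,\ t+1\}$, which forces $t \leq \max\{2s-a,0\}$ when $s \leq \min\{a,1\}$. With the parameters in the roles you assigned them, the desired inequality does not come out.

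The more serious problem is the final paragraph. The sharp $(s,t)$-Furstenberg bound in the critical range is the entire substance of the theorem, and what you offer there is a list of the pre-existing toolkit --- high-low frequency decomposition, broad--narrow dichotomy, multilinear Kakeya, induction on scales --- which was available for years and yields only the partial results of Orponen--Shmerkin, Guth--Solomon--Wang, and others, not the sharp exponent. The assertion that ``carefully balancing these contributions and iterating the induction to absorb the logarithmic losses yields the required bound'' is precisely the statement that was open before \cite{ren2023}; their proof requires substantially more structure (reduction to regular and quasi-product sets, a new incidence estimate, and a bootstrapping of $\epsilon$-improvements), none of which is reflected in your outline. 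So the proposal is not a proof: it has a hole exactly at its central claim. For the purposes of this paper the correct move is the one the authors make, namely to quote the theorem from \cite{ren2023}.
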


In this paper, we consider the higher-dimensional analog of the exceptional set estimate. Let $G(n,k)$ denote the set of all $k$-dimensional subspaces of $\R^n$. For any $V\in G(n,k)$, let $\pi_V:\R^n \rightarrow V$ denote the orthogonal projection onto $V$. Mattila \cite{Mattila75} generalized Marstrand's theorem in this context, showing that for any analytic $A\subseteq\R^n$, $\dim_H(\pi_V A) = \min\{\dim_H(A), k\}$, for almost every $V \in G(n,k)$. 
 
Let $n, k\in \N$ and $a, s\in \R$ such that $k < n$, $a \leq n$ and $0 < s \leq \min\{a,k\}$. Define
\[
 T_{n, k}(a, s) = \sup_{A \subseteq \R^n, \,\dim_H(A) = a}\dim_H(\{V \in G(n, k) \mid \dim_H(\pi_V A) < s\}),
\]
where the supremum is taken over all analytic subsets. 

Mattila \cite{Mattila75} proved that
\begin{equation}\label{eq:MattilaBound}
    T_{n,k}(a,s) \leq k(n-k) + s - k.
\end{equation}
Note that Mattila's bound generalizes Kaufman's bound to higher dimensions. 

Mattila's bound was subsequently improved by several authors. Falconer \cite{Falconer82}, and later Peres and Schlag \cite{PerSch00}, proved that
\begin{equation}\label{eq:FalconerBound}
    T_{n,k}(a,s) \leq k(n-k) + s - a
\end{equation}
which improves Mattila's bound when $a > k$. Ren \cite{Ren23} gave an $\epsilon$-improvement of Mattila's bound for the case where $k = n-1$. He \cite{He20} showed that if $k = 1$ or $k = n - 1$ then $T_{n, k}(a, \frac{k}{n}a) \leq k(n - k) - 1$, which improves previous bounds for certain values of $a$. Finally, Gan \cite{Gan24} further improved these bounds and showed that his bounds are sharp for certain ranges of $a$ and $s$.\footnote{Gan's bounds are difficult to calculate due to their reliance on the Brascamp-Lieb inequality (which is expressed in terms of a $\sup$ over an $\inf$) and his paper only provides more explicit bounds for certain values of $a$ and $s$. Gan has shown that these explicit bounds are sharp when either $k \leq \frac{n}{2}$, $1 < a \leq 2$ and $s$ falls in a certain range depending on $a$ or $k \geq \frac{n}{2}$, $n - 1 < a \leq n$ and $s$ falls in a certain range depending on $a$. For more detail on Gan's bounds and when they are sharp, it is best to consult Gan's paper directly.} However, in general, sharp bounds for $T_{n,k}(a,s)$ are unknown. 




The main goal of this paper is to prove sharp upper bounds on $T_{n,k}(a,s)$ in the cases where $k=1$ and $k=n-1$, as well as to provide lower bounds on $T_{n, k}(a, s)$ for all values of $n$ and $k$, which we conjecture to be sharp. To do so, we introduce a function $S_{n, k}(a, s)$.
\begin{definition}
For any $n, k \in \N$ and $a, s \in \R$, let $S_{n, k}(a, s)$ denote
\[
    S_{n, k}(a, s) = k(n-k) -\left(\floor{a - s}+1\right)\left(k - \ceil{s} +1\right)+\max\{0, \floor{a-s}+2s-a - \ceil{s}+1\}.
\]
\end{definition}

\begin{remark}
Since it is somewhat difficult to understand the function $S_{n, k}(a, s)$ simply by staring at the formula, we have included Figure~\ref{fig:chart1}, which shows the values of $S_{n, k}(a, s)$ for the fairly representative case where $n = 10$ and $k = 3$.
\end{remark}


We show that $S_{n, k}(a, s)$ is always a lower bound on $T_{n, k}(a, s)$ and that when $k = 1$ or $k = n - 1$, they are equal (and thus in these cases we have found a sharp upper bound for $T_{n, k}(a, s)$).

\begin{theorem}\label{thm:maintheorem}
Let $n \geq 2$, $0 < a \leq n$, $1\leq k \le n-1$, and $\max\{0,a -n+k\} < s \leq \min\{a,k\}$. Then $T_{n, k}(a, s) \geq S_{n, k}(a, s)$. Furthermore, if $k=1$ or $k=n-1$, then $T_{n, k}(a, s) = S_{n, k}(a, s)$.
\end{theorem}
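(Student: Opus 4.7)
\emph{Plan.} The theorem has two parts: the lower bound $T_{n,k}(a,s) \geq S_{n,k}(a,s)$ for all admissible $(n,k,a,s)$, and a matching upper bound in the extreme cases $k \in \{1, n-1\}$. I would address these separately.

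\emph{Lower bound.} Writing $m = \lfloor a-s\rfloor$ and $c = \lceil s\rceil$, the formula
\[
   S_{n,k}(a,s) = k(n-k) - (m+1)(k-c+1) + \max\{0,\, m + 2s - a - c + 1\}
\]
suggests a geometric construction: find $A \subset \R^n$ of Hausdorff dimension $a$ so that, for every $V \in G(n,k)$ whose orthogonal complement meets a fixed auxiliary subspace in too large a subspace, the projection $\pi_V A$ drops below dimension $s$. The plan is to take $A = W + B$, where $W$ is a fixed $(c-1)$-dimensional linear subspace contributing $c - 1$ forced dimensions to every projection, and $B$ lies in a complementary space carrying the remaining $a - (c-1)$ dimensions as an $(m+1)$-fold combinatorial structure built from $m$ integer-dimensional directions together with one Cantor-type factor of fractional dimension accounting for the $\max$ correction. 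Parameterizing the ``bad'' $V$ by incidence conditions against $W \oplus \mathrm{span}(B)$ should cut out a subvariety of $G(n,k)$ of the claimed dimension; verifying the Hausdorff-dimension computation for the fractional factor is the main technical step.

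\emph{Upper bound, $k = 1$.} I would induct on $n$. The base case $n = 2$ is exactly Theorem~\ref{thm:RenWang}: in the admissible range one has $S_{2,1}(a,s) = \max\{0, 2s - a\}$, the Ren--Wang bound. For $n \geq 3$ with $a \leq n-1$, I slice $\S^{n-1}$ by great $(\lceil a\rceil - 1)$-spheres indexed by $P \in G(n, \lceil a\rceil)$. For $e \in P$ the identity $\pi_e = \pi_e \circ \pi_P$ (valid because $\pi_e$ annihilates $P^\perp$) gives $\pi_e A = \pi_e(\pi_P A)$, hence $\{e \in P\cap \S^{n-1}: \dim_H \pi_e A < s\} \subseteq \{e : \dim_H \pi_e(\pi_P A) < s\}$ inside $P$. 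Since $\dim_H \pi_P A = a$ for $\gamma$-a.e.\ $P$ by Mattila's theorem, the inductive hypothesis applied in the ambient space $P \cong \R^{\lceil a\rceil}$ yields
\[
   \dim_H\bigl(\{V\in G(n,1)\mid \dim_H \pi_V A < s\} \cap P\bigr) \leq S_{\lceil a\rceil, 1}(a, s)
\]
a.e. A Marstrand-type slicing inequality on $\S^{n-1}$ then gives $\dim_H T_{n,1}(a,s) \leq S_{\lceil a\rceil, 1}(a,s) + (n - \lceil a\rceil)$, and a direct arithmetic check using the definition of $S_{n,k}$ shows this equals $S_{n,1}(a,s)$. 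The residual case $a > n-1$ is handled by slicing $A$ itself by a generic hyperplane through $\R^n$ to reduce $(n,a)$ to $(n-1, a-1)$, via the parallel identity $S_{n,1}(a,s) = S_{n-1,1}(a-1,s)$.

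\emph{Upper bound, $k = n-1$.} Hyperplane projections do not admit the same reduction, since projecting $A$ onto a subspace does not straightforwardly control hyperplane projections of the original $A$. I expect the proof to combine slicing of $A$ (to reduce $a$ in integer steps) with a Fourier-analytic or decoupling input, possibly adapting Gan's \cite{Gan24} Brascamp--Lieb-based framework or a direct Ren--Wang-style refined incidence argument. The hard part is supplying a sharp base case and a recursion that preserves the delicate integer-combinatorial structure of $S_{n,n-1}(a,s)$ -- in particular the $\max\{0, m+2s-a-c+1\}$ correction term, which appears to force a scale-by-scale decomposition of the exceptional set at the level of $(\delta, s)$-sets rather than a black-box application of any existing projection theorem.
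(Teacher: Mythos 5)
Your proposal leaves genuine gaps in both directions. For the $k=1$ upper bound, the single-step reduction to a generic $P\in G(n,\ceil{a})$ is not sharp in all regimes: applying the inductive hypothesis inside $P$ requires the admissibility condition $s>\max\{0,\,a-\ceil{a}+1\}$, which fails whenever $s\le 1-(\ceil{a}-a)$, in particular for every integer $a$. In that regime the sliced exceptional set is forced to be empty and your scheme yields only $\dim_H E\le n-\ceil{a}$, which is strictly weaker than $S_{n,1}(a,s)$; for instance $n=5$, $a=2.5$, $s=0.3$ gives $S_{5,1}(a,s)=1.1$ while your bound is $2$. The correct branching is on $s$ versus $a-1$, not $a$ versus $n-1$: when $s\le a-1$ one slices $A$ by a generic hyperplane while \emph{projecting the direction set} into that hyperplane (Lemma~\ref{lem:upper1}, which needs Lemma~\ref{lem:projectionSubspaces} to keep $\dim_H E$ essentially unchanged, a separate contradiction argument to rule out the $\min\{t,n-1\}$ cap, and Falconer's bound \eqref{eq:FalconerBound} for the boundary case $s=1$, $a=2$); when $s>a-1$ one instead restricts the direction set to a generic hyperplane via the slicing Corollary~\ref{cor:slicingSn} while projecting $A$ (Lemma~\ref{lem:upper2}), losing exactly one dimension of $E$ per step down to the Ren--Wang base case. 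Also, the ``Marstrand-type slicing inequality'' you invoke for a.e.\ slices of $E$ is really the contrapositive of an iterated Corollary~\ref{cor:slicingSn} and itself requires proof (compactness, gnomonic projection, Fubini), though that part is repairable.

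For $k=n-1$ you do not give an argument, and the guess that one needs Brascamp--Lieb or decoupling input beyond the base case points in the wrong direction: the same elementary induction works, using two moves dual to the $k=1$ ones --- project $A$ to a generic hyperplane $V$ and replace each $W\in E$ by $W\cap V$ (Lemma~\ref{lem:upper3}), or slice $A$ and keep those $W\in E$ with $W^\perp\subset V$, then slice their projections (Lemma~\ref{lem:upper4}) --- with Theorem~\ref{thm:RenWang} only as the $n=2$ base and Mattila's bound \eqref{eq:MattilaBound} for the boundary case $s=a=n-1$; no further harmonic-analytic machinery is needed. For the lower bound, your product construction has the right shape, but the fractional correction $\max\{0,\floor{a-s}+2s-a-\ceil{s}+1\}$ cannot be produced by incidence (Schubert-type) conditions alone, since those give only integer-dimensional families of bad $V$; a Cantor factor inside $A$ does not by itself create a fractal set of exceptional directions. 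One needs Kaufman's planar example \cite{Kaufman69}, whose exceptional set of directions has dimension $2s-a$, as the seed, and then the four lifting operations (i)--(iv) of Section~\ref{sec:lb} (taking products with $\{0\}$ or $\R$ in the point and direction variables) together with the case-by-case arithmetic identities such as $S_{n-1,k-1}(a-1,s-1)+n-k=S_{n,k}(a,s)$. Your sketch defers exactly this verification, so as written neither the lower bound nor either upper bound is established.
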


Although we cannot show that $T_{n, k}(a, s)$ is equal to $S_{n, k}(a, s)$ for all values of $n, k, a$ and $s$, we conjecture that it is.

\begin{conjecture}
\label{conj:main}
Let $n \geq 2$, $0 < a \leq n$, $1\leq k \le n-1$, and $\max\{0,a -n+k\} < s \leq \min\{a,k\}$. Then $T_{n, k}(a, s) = S_{n, k}(a, s)$.
\end{conjecture}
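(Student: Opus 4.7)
The final statement is a conjecture strengthening Theorem~\ref{thm:maintheorem} by extending the equality $T_{n, k}(a, s) = S_{n, k}(a, s)$ from $k \in \{1, n - 1\}$ to all intermediate $k$. Since the lower bound $T_{n, k}(a, s) \geq S_{n, k}(a, s)$ is supplied by Theorem~\ref{thm:maintheorem} for every admissible $(n, k, a, s)$, the entire content of the conjecture is the matching upper bound $T_{n, k}(a, s) \leq S_{n, k}(a, s)$ in the range $2 \leq k \leq n - 2$.

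My primary plan is to induct on $k$ via slicing. Any $V \in G(n, k)$ decomposes as $V = V' \oplus L$, where $V' \in G(n, k - 1)$ and $L$ is a line inside $V$ orthogonal to $V'$, so $\pi_V$ factors as projection onto $V'$ followed by adjoining the $L$-coordinate of the fibers. A sufficiently sharp exceptional-set slicing result would let me reduce the problem for $\pi_V$ to a coupled problem for $\pi_{V'}$ (handled by induction) and for $\pi_L$ on typical fibers (handled by the $k = 1$ case of Theorem~\ref{thm:maintheorem}, which uses Ren--Wang). The piecewise-linear combinatorial structure of $S_{n, k}(a, s)$---built from the floors and ceilings of $s$ and $a - s$---strongly suggests that the right inductive scheme tracks the integer and fractional parts of $a$ and $s$ separately as $k$ grows, with the $\max$ correction term encoding a boundary stratum. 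A parallel route is to sharpen Gan's Brascamp--Lieb formulation by explicitly identifying the extremal datum for each $(n, k, a, s)$ and verifying that its evaluation matches $S_{n, k}(a, s)$; the fact that Gan's bound is already known to be sharp in several ranges of $(a, s)$ is evidence that this framework is tight in principle.

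The main obstacle is that intermediate-$k$ projections lose the structural features that drive the boundary cases: Ren--Wang exploits planar incidence geometry, and the hyperplane case is accessible via the duality $V \mapsto V^\perp$, neither of which is available for $G(n, k)$ when $2 \leq k \leq n - 2$. In particular, controlling the interaction between slices of $A$ and intermediate-dimensional exceptional sets---essentially a $k$-plane analogue of Furstenberg-set theory---is not currently within reach of either Fourier-analytic or combinatorial methods, and the necessary sharp slicing estimate for exceptional projection directions does not follow from either the $k-1$ or the $k = 1$ bound by any routine argument. Bridging this gap is precisely why the authors state this as a conjecture rather than a theorem, and I expect any full proof to require substantially new ideas beyond what suffices for the boundary $k$.
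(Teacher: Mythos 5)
The statement you are addressing is stated in the paper as a conjecture: the paper itself contains no proof of it. What the paper does prove (Theorem~\ref{thm:maintheorem}) is the lower bound $T_{n,k}(a,s) \geq S_{n,k}(a,s)$ for all admissible parameters, plus the matching upper bound only for $k=1$ and $k=n-1$. Your proposal correctly identifies this situation and correctly isolates the open content as the upper bound for $2 \leq k \leq n-2$, but it does not close that gap: the ``sufficiently sharp exceptional-set slicing result'' on which your induction on $k$ would rest is precisely the missing ingredient, and you assume it rather than prove it. As written, the proposal is a research plan, not a proof, and you say as much yourself; so the verdict has to be that there is a genuine gap --- namely, the key lemma that would drive the induction does not exist in the paper or in your argument.

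One structural remark for your plan: the inductive scheme the paper actually uses for the proven boundary cases is induction on the ambient dimension $n$, not on $k$. The engine is a collection of lemmas (Lemmas~\ref{lem:upper2}--\ref{lem:upper4}) that convert an $(n+1,k,a,s,t)$-pair into an $(n,k,\cdot)$- or $(n,k-1,\cdot)$-pair using Mattila's projection and slicing theorems together with Corollary~\ref{cor:slicingSn}, with Ren--Wang (Theorem~\ref{thm:RenWang}) as the two-dimensional base case. These pair-conversion lemmas exploit the special geometry of lines ($k=1$) and hyperplanes ($k=n-1$), where slicing by a generic hyperplane interacts cleanly with the exceptional set in $G(n+1,1)$ or $G(n+1,n)$; for intermediate $k$ the analogous conversion would have to control how a $k$-plane exceptional set in $G(n+1,k)$ behaves under slicing, and no such statement is available. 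That is the same obstruction you describe from the induction-on-$k$ side, so either route runs into the same wall; your diagnosis of why the statement remains a conjecture is accurate, but neither your decomposition $V = V' \oplus L$ nor the appeal to Gan's Brascamp--Lieb framework is developed to the point of yielding the required estimate.
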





The restrictions on $n, k, a$ and $s$ in the theorem and conjecture above are included in order to rule out trivial cases. In particular, note that for every $A\subseteq \R^n$ and $V\in G(n,k)$, $A\subseteq \pi_V A\times V^\perp$ and therefore $\dim_H(\pi_V A)\ge \dim_H(A)-(n-k)$. Since we always have $\dim_H(\pi_V A)\ge 0$, it follows that if $s \leq \max\{0,a -n+k\}$, then $T_{n,k}(a,s) = 0$. 

\begin{remark}
Proposition~37 of Gan~\cite{Gan24} gives a lower bound on $T_{n, k}(a, s)$. A careful comparison of our lower bounds with this proposition shows that the two lower bounds are equal in the regions that Gan calls Type I, II, and III while in Type IV our lower bounds are larger. In addition, we mentioned above that Gan finds sharp bounds for $T_{n, k}(a, s)$ for certain ranges of $a$ and $s$ and it is not difficult to verify that these bounds agree with our lower bounds in this range.
\end{remark}

\begin{remark}
\label{rmk:simplified}
In the cases for which we prove sharp upper bounds for $T_{n, k}(a, s)$ (namely when $k = 1$ and $k = n - 1$), we can slightly simplify the formula for $S_{n, k}(a, s)$. In particular, we have
\begin{align*}
S_{n, 1}(a, s) &= n - 2 - \floor{a - s} + \max\{0, \floor{a-s}+2s-a\}\\
S_{n, n-1}(a, s) &= \ceil{s} - 1 + \max\{0, 2s-a - \ceil{s}+1\}.\\
\end{align*}
In order to help understand these expressions, Figure~\ref{fig:chart2} shows the values of $S_{n, 1}(a, s)$ and $S_{n, n - 1}(a, s)$ when $n = 4$.
\end{remark}


\begin{figure}
\label{fig:chart1}
\begin{tikzpicture}
\draw[thick] (0,0) rectangle (10, 3);
  
\foreach \x in {1, ..., 10} {
  \draw[dotted] (\x,0) -- (\x,3);
}

\foreach \x in {1, 2} {
  \draw[dotted] (0, \x) -- (10, \x);
  \draw (\x, \x) -- (7 + \x, \x);
}

\foreach \x in {0, ..., 7} {
  \draw[thick] (\x, 0) -- (\x + 3 + 0.25, 3 + 0.25);
}

\foreach \x in {1, ..., 7} {
  \draw (\x + 2.65, 3.2) node {\tiny $m = \x$};
}

\foreach \y in {0, 1, 2} {
  \foreach \x in {0, ..., 6}{
    \draw (\x + \y, \y) -- (\x + \y + 2, \y + 1);
    \draw[fill=gray, opacity=0.3] (\x + \y, \y) -- (\x + \y + 1, \y + 1) -- (\x + \y + 2, \y + 1) -- cycle;
    
    \newcount\a
    \tikzmath{\a = 21 - (\x + 1)*(3 - \y);}
    \draw (\x + \y + 0.8, \y + 0.2) node {\tiny $\the \a$};
  }
}

\draw (1, 2) node {N/A};  
\draw (9, 1) node {$0$ ($\emptyset$)};  

\draw (-0.75, 0.5) node {$\ceil{s} = 1$}; 
\draw (-0.75, 1.5) node {$\ceil{s} = 2$};
\draw (-0.75, 2.5) node {$\ceil{s} = 3$};
\end{tikzpicture}
 \caption{The values of $S_{n, k}(a, s)$ for $n=10$ and $k=3$. The horizontal axis is $a$ and the vertical axis is $s$. The integer in each unshaded region denotes the value of $S_{n, k}(a, s)$ in that region; the value of $S_{n, k}(a, s)$ in each shaded region is given by $k(n - k - m) + (m - 1) \ceil{s}+ 2s - a$ where $m = \floor{a - s} + 1$.}
 \end{figure}

\begin{figure}
\centering
\begin{tikzpicture}
\draw[thick] (0, 1) rectangle (4, 2);
  
\foreach \x in {1, 2, 3} {
  \draw[dotted] (\x, 1) -- (\x, 2);
}

\foreach \x in {0, 1, 2, 3} {
    \draw[thick] (\x, 1) -- (\x + 1, 2);
}

\foreach \x in {0, 1, 2} {
    \draw[fill = gray, opacity=0.3] (\x, 1) -- (\x + 1, 2) -- (\x + 2, 2) -- cycle;
    \draw (\x, 1) -- (\x + 2, 2);
    \newcount\a
    \tikzmath{\a = 2 - \x;}
    \draw (\x + 0.8, 1.2) node {\tiny $\the \a$};
}

\draw (-.7, 1.9) node {$s = 1$};
\draw (4, .7) node {$a = 4$};

\draw[thick] (7,0) rectangle (11, 3);
  
\foreach \x in {1, 2, 3} {
  \draw[dotted] (\x + 7,0) -- (\x + 7,3);
}

\foreach \y in {1, 2} {
  \draw[dotted] (7,\y) -- (11,\y);
  \draw[thick] (7 + \y, \y) -- (8 + \y, \y);
}

\foreach \x in {0, 1} {
    \draw[thick] (\x + 7, 0) -- (\x + 10, 3);
}

\foreach \x in {0, 1, 2} {
    \draw[fill = gray, opacity=0.3] (\x + 7, \x) -- (\x + 8, \x + 1) -- (\x + 9, \x + 1) -- cycle;
    \draw (\x + 7, \x) -- (\x + 9, \x + 1);
    \newcount\a
    \draw (\x + 7.8, \x + 0.2) node {\tiny $\x$};
}

\draw (6.3,2.9) node {$s = 3$};
\draw (11, -.3) node {$a = 4$};


    


\end{tikzpicture}
\caption{The values of $S_{n, k}(a, s)$ (and hence also $T_{n, k}(a, s)$) when $n = 4$ for $k = 1$ and $k = 3$. The horizontal axis is $a$ and the vertical axis is $s$. The integer in each unshaded region denotes the value of $S_{n, k}(a, s)$ in that region. The value of $S_{n, k}(a, s)$ in each shaded region is non-integral and is given by the formula in Remark~\ref{rmk:simplified}.}
\label{fig:chart2}
\end{figure}

\section{Proof of lower bound}\label{sec:lb}

In this section, we will prove the first part of Theorem~\ref{thm:maintheorem}, namely that for any $n, k, a$ and $s$ which satisfy the assumptions of the theorem, $T_{n,k}(a,s)\ge S_{n, k}(a,s)$. We prove this by induction on $n$, with $n = 2$ as the base case.

When $n=2$, we have $k=1$ and $\max\{0,a-1\}<s\le\min\{a,1\}$. Therefore $\lceil s\rceil=1$, $\floor{a-s} =0$ and $S_{n, k}(a,s)=\max\{0,2s-a\}$. Kaufman \cite{Kaufman69} showed that if $2s-a\ge 0$, then indeed $T_{n,1}(a,s)\ge 2s-a$. On the other hand, if $2s-a<0$ then the lower bound is trivial.

Now let $n\ge 3$ be arbitrary. Our proof is based on the following, easy observations:  
   \begin{itemize}
   \item[(i)] If $A\subset \R^{n-1}$, $A'=A\times\{0\}\subset \R^{n}$, $V\in G(n-1,k)$, $V'\in G(n,k)$ and $\pi_{\R^{n-1}}V'=V$ (this is projection onto the first
$(n - 1)$ coordinates), then $\dim_H(\pi_V A)=\dim_H(\pi_{V'} A')$. Consequently, $T_{n,k}(a,s)\ge T_{n-1,k}(a,s)+k$. Note that the added $k$ is because there are $k$ degrees of freedom when picking a $V'$ which projects onto a given $V$. Namely, to form $V'$ we can pick a basis for $V$ (as a subspace of $\R^{n - 1}$) and then fill in whatever we want in the $n^\text{th}$ coordinate of each vector in this basis.
\item[(ii)] If $A\subset \R^{n-1}$, $A'=A\times\R\subset \R^{n}$, $V\in G(n-1,k)$, $V'=V\times\{0\}\in G(n,k)$, then $\dim_H(\pi_V A)=\dim_H(\pi_{V'} A')$. Consequently, $T_{n,k}(a,s)\ge T_{n-1,k}(a-1,s)$.
\item[(iii)] If $A\subset \R^{n-1}$, $A'=A\times\{0\}\subset \R^{n}$, $V\in G(n-1,k-1)$, $V'=V\times\R\in G(n,k)$, then $\dim_H(\pi_V A)=\dim_H(\pi_{V'} A')$.  Consequently, $T_{n,k}(a,s)\ge T_{n-1,k-1}(a,s)$.
\item[(iv)] If $A\subset \R^{n-1}$, $A'=A\times\R\subset \R^{n}$, $V\in G(n-1,k-1)$, $V'\in G(n,k)$, $V'\supset V$, then $\dim_H(\pi_V A)+1 \geq \dim_H(\pi_{V'} A')$. Consequently, $T_{n,k}(a,s)\ge T_{n-1,k-1}(a-1,s-1)+n-k$. Note that the $(n - k)$ term appears because there are $n-k$ degrees of freedom when picking $V'$ which extends $V$. In particular, to extend $V'$ to $V$, we can pick one unit vector from the $n - (k - 1)$-dimensional subspace $V^\perp$ (where $V$ is considered as a subspace of $\R^n$ rather than $\R^{n - 1}$).
\end{itemize}

Assume that $n,k,a,s$ satisfy the conditions of Theorem \ref{thm:maintheorem}. We distinguish four cases: If $k, a, s> 1$, then $n-1,k-1,a-1,s-1$ also satisfy the conditions of Theorem \ref{thm:maintheorem}, and hence, by (iv),
 $$T_{n,k}(a,s)\ge T_{n-1,k-1}(a-1,s-1)+n-k\ge S_{n-1,k-1}(a-1,s-1)+n-k=S_{n,k}(a,s).$$

If $s \leq 1$, $a < 1+s$, and $k \leq n -2$, then $n-1,k,a,s$ satisfy the conditions of Theorem \ref{thm:maintheorem}, and hence, by (i), $$T_{n,k}(a,s) \ge T_{n-1,k}(a,s) + k \ge S_{n-1,k}(a,s)+k = S_{n,k}(a,s).$$

If $s \leq 1$, $a \geq 1+s$, and $k \leq n -2$, then $n-1,k,a-1,s$ satisfy the conditions of Theorem \ref{thm:maintheorem}, and hence, by (ii), $$T_{n,k}(a,s) \ge T_{n-1,k}(a-1,s) \ge S_{n-1,k}(a-1,s) = S_{n,k}(a,s).$$

Finally, if $s \leq 1$ and $k = n-1$, then $n-1,k-1,a,s$ satisfy the conditions of Theorem \ref{thm:maintheorem}, and hence, by (iii), $$T_{n,n-1}(a,s) \ge T_{n-1,n-2}(a,s) \ge S_{n-1,n-2}(a,s) = S_{n,n-1}(a,s).\qed $$

\section{Proof of upper bounds}

In this section, we will prove the second part of Theorem~\ref{thm:maintheorem}, namely that when $k = 1$ or $k = n - 1$, $T_{n, k}(a, s) \leq S_{n, k}(a, s)$. Our proof can be thought of as a `reverse' of the proof of the lower bound discussed in the previous section. In order to explain what we mean by this, it helps to introduce the following definition.
\begin{definition}
We say that sets $A\subseteq\R^n$, $E\subseteq G(n,k)$ are an $(n, k, a, s, t)$-pair if $A$ is analytic, both $A$ and $E$ are nonempty, $\dim_H(A) = a$, $\dim_H(E) = t$, and $\dim_H(\pi_V A) < s$ for every $V \in E$.
\end{definition}

\begin{remark}
Note that $T_{n, k}(a, s)$ is by definition the supremum of $t$ over all $(n, k, a, s, t)$-pairs.
\end{remark}

The key idea of our proof is that given an $(n + 1, k, a, s, t)$-pair, we can use it to construct an $(n, k', a', s', t')$-pair for some appropriately chosen $k', a', s'$ and $t'$. This allows us to lift upper bounds on $T_{n,k}(a, s)$ from $\R^n$ to $\R^{n + 1}$ and thus prove our upper bound by induction on $n$ (with Ren and Wang's Theorem~\ref{thm:RenWang} providing the base case).

Our proof of the lower bound on $T_{n, k}(a, s)$ from the previous section can be seen as dual to this proof. In particular, observations $(i)$ through $(iv)$ in that proof can all be seen as ways of taking an $(n, k, a, s, t)$-pair and using it to construct an $(n + 1, k', a', s', t')$-pair for some appropriately chosen $k', a', s'$ and $t'$. Moreover, these constructions closely mirror the constructions we will use in our proof of the upper bound---in a sense, the constructions in the proof of the upper bound can be seen as ways to ``undo'' the constructions from the proof of the lower bound.

The following four lemmas state precisely the different sorts of ways we have of turning an $(n + 1, k, a, s, t)$-pair into an $(n, k', a', s', t')$-pair. The first pair of lemmas will be used to prove the upper bound on $T_{n, k}(a, s)$ when $k = 1$ and are dual to observations $(i)$ and $(ii)$, respectively, from the previous section.

\begin{restatable}{lemma}{uppertwo}
\label{lem:upper2}
Let $n\geq 2$. Suppose that $0 < a \leq n$, $a-n < s\leq \min\{a, 1\}$, $s >0$, and $t > 1$. If there is an $(n+1, 1, a, s, t)$-pair, then there is an $(n,1,a,s,t^\prime-1)$-pair for every $1 < t^\prime < t$.
\end{restatable}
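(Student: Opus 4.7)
The plan is to reverse observation~(i) of Section~\ref{sec:lb}: given the $(n+1,1,a,s,t)$-pair $(A,E)$, I will choose a suitable $n$-dimensional subspace $W\subseteq\R^{n+1}$, set $A^*\defeq\pi_W A\subseteq W\cong\R^n$, and take $E^*$ to be a subset of $E\cap G(n,1)_W$, where $G(n,1)_W\subseteq G(n+1,1)$ denotes the codimension-one submanifold of lines contained in~$W$.  Observation~(i) lifted an $n$-dimensional example to an $(n+1)$-dimensional one by padding $A$ with a zero coordinate and by pulling back each $V\in G(n,1)$ to a one-parameter family of lines in $G(n+1,1)$; the construction here reverses both moves by projecting $A$ back down to $\R^n$ and taking a codimension-one slice of~$E$.

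The key algebraic identity is $\pi_{V^*}=\pi_{V^*}\circ\pi_W$ as maps $\R^{n+1}\to V^*$ for every line $V^*\subseteq W$, which holds because the $W^\perp$-component of $x\in\R^{n+1}$ is orthogonal to $V^*$.  Consequently $\pi_{V^*}(A)=\pi_{V^*}(A^*)$ for every such $V^*$, so whenever $V^*\in E\cap G(n,1)_W$ one automatically obtains $\dim_H\pi_{V^*}A^*=\dim_H\pi_{V^*}A<s$.

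It remains to pick $W\in G(n+1,n)$ satisfying (a)~$\dim_H(\pi_W A)=a$ and (b)~$\dim_H(E\cap G(n,1)_W)\ge t-1$.  Condition~(a) holds for $\gamma_{n+1,n}$-almost every $W$ by the Marstrand--Mattila projection theorem, since $a\le n$.  For~(b), I would parameterize $G(n+1,1)$ as $\S^n/{\pm}$ and identify each $G(n,1)_W$ with the equator $\ell^\perp\cap\S^n$, where $\ell\perp W$, so that the slice in question is the zero set of the smooth linear functional $v\mapsto\langle v,\ell\rangle$.  Working in finitely many affine charts on $\S^n$, this becomes a family of affine hyperplane slices of a dimension-$t$ subset of $\R^n$, and the classical Euclidean Marstrand slicing theorem (using $t>1$) produces a positive-measure set of $\ell$ for which $\dim_H(E\cap G(n,1)_W)\ge t-1$.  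Intersecting with the full-measure set from~(a) yields a non-empty set of admissible $W$.

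Fixing such a~$W$, $A^*=\pi_W A$ is analytic with $\dim_H A^*=a$, and since $\dim_H(E\cap G(n,1)_W)\ge t-1>t'-1$, a standard Hausdorff-dimension regularization (e.g.\ Howroyd's theorem together with a shrinking argument) yields a subset $E^*\subseteq E\cap G(n,1)_W$ with $\dim_H E^*=t'-1$; by the key identity, $(A^*,E^*)$ is then the required $(n,1,a,s,t'-1)$-pair.  The main obstacle will be making the slicing statement in~(b) precise: Marstrand-type slicing theorems are usually stated for affine hyperplanes in $\R^n$, and transferring them to projective hyperplane slices on $\S^n$ in a way compatible with the invariant measure on $G(n+1,n)$ requires a careful chart-by-chart argument, together with verification that the bi-Lipschitz identifications preserve Hausdorff dimension on the relevant compact pieces.
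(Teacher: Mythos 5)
Your overall strategy is the same as the paper's: project $A$ onto a generic hyperplane $W\in G(n+1,n)$ (Mattila's projection theorem gives $\dim_H(\pi_W A)=a$ since $a\le n$), slice $E$ by keeping only the lines of $E$ contained in $W$, and use the factorization $\pi_{V^*}=\pi_{V^*}\circ\pi_W$ for lines $V^*\subset W$ to transfer the exceptionality. Your chart-by-chart slicing of the Grassmannian is exactly what the paper's Corollary~\ref{cor:slicingSn} does via the gnomonic projection onto an affine hyperplane, including the Fubini/measurability step you flag as the ``main obstacle.''

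There is, however, a genuine gap in your step (b). You apply the slicing theorem to $E$ at the exponent $t=\dim_H E$ and conclude that $\dim_H(E\cap G(n,1)_W)\ge t-1$ for positively many $W$. Mattila's slicing theorem (Theorem~\ref{thm:mattilaslicing}) requires an $\mathcal{H}^t$-measurable set with $0<\mathcal{H}^t<\infty$; knowing only $\dim_H E=t$, you may have $\mathcal{H}^t(E)=0$, and then no such conclusion is available: for each $\epsilon>0$ you get a positive-measure set of $W$ with slice dimension $\ge t-1-\epsilon$, but these sets may shrink as $\epsilon\to 0$, so no single $W$ with slice dimension $\ge t-1$ is guaranteed. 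This is precisely why the lemma is stated for every $t'<t$ rather than for $t$ itself. Your appeal to Howroyd's theorem comes too late to repair this, since it is applied after the unjustified slicing claim. The fix is the paper's order of operations: first use $t'<t$ to pass to a compact $E_0\subseteq E$ with $0<\mathcal{H}^{t'}(E_0)<\infty$, then slice $E_0$ (here $t'>1$ is what the corollary needs), obtaining positively many $W$ with $\dim_H(\{V^*\in E_0\mid V^*\subset W\})=t'-1$ exactly, and intersect with the full-measure set of $W$ for which $\dim_H(\pi_W A)=a$. With that reordering, the rest of your argument (the factorization identity, analyticity of $\pi_W A$, and the compactness needed for measurability) goes through and coincides with the paper's proof.
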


\begin{restatable}{lemma}{upperone}
\label{lem:upper1}
Let $n\geq 2$. Suppose that $1 < a \leq n+1$, $a-n < s\leq \min\{a-1, 1\}$, and $s > 0$. If there is an $(n+1, 1, a, s, t)$-pair, then there is an $(n, 1, a-1, s, t^\prime)$-pair, where $t^\prime = \min\{t, n-1\}$.
\end{restatable}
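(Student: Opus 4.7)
The plan is to invert observation~(ii) from Section~\ref{sec:lb}. Given an $(n+1,1,a,s,t)$-pair $(A,E)$, I would slice $A$ by a carefully chosen affine hyperplane to obtain a set of Hausdorff dimension $a-1$ in $\R^n$, and push $E$ down to a set of directions in $G(n,1)$ of Hausdorff dimension $\min\{t,n-1\}$.

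The first step is to choose a unit vector $w\in\S^n$ (the new ``vertical'' direction) and a level $y_0\in\R$ so that, writing $w^\perp\cong\R^n$ and $H_{y_0}=\{z\in\R^{n+1}:\langle z,w\rangle=y_0\}$, the following hold:
\begin{enumerate}
\item the slice $A^{(y_0)}=\{x\in w^\perp:x+y_0 w\in A\}$ has $\dim_H A^{(y_0)}\ge a-1$;
\item $w,-w\notin E$, so that the central projection $f_w\colon G(n+1,1)\setminus\{w\}\to G(n,1)$, $\ell\mapsto\pi_{w^\perp}(\ell)$, is defined on all of $E$;
\item $\dim_H f_w(E)\ge\min\{t,n-1\}$.
\end{enumerate}
For (1), since $\dim_H A=a>1$, the Marstrand--Mattila slicing theorem (applied to a Frostman subset of $A$) guarantees that for a.e.\ $w$ a positive-$\mathcal{H}^1$-measure set of $y_0$'s satisfies the requirement. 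Condition (2) fails only on a null set of $w$'s because $\dim_H E\le n$. Condition (3) is a Marstrand-type theorem for the smooth family of central projections $\{f_w\}_{w\in\S^n}$ on $G(n+1,1)\cong\mathbb{P}^n$: away from $w$ the map $f_w$ is a smooth submersion, and a chart argument should reduce the claim either to the classical Marstrand projection theorem in $\R^n$ or to the transversality framework of Peres--Schlag.

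The next step is to extract analytic subsets $A'\subseteq A^{(y_0)}$ with $\dim_H A'=a-1$ and $E'\subseteq f_w(E)$ with $\dim_H E'=\min\{t,n-1\}=:t'$; both extractions of subsets of prescribed Hausdorff dimension are standard for analytic sets. To verify that $(A',E')$ is the desired $(n,1,a-1,s,t')$-pair, pick any $e\in E$; in coordinates with $w=e_{n+1}$, write $e=(v,r)$ with $v\in\R^n$, $\|v\|^2+r^2=1$, and note that $v\neq 0$ by (2). Set $\hat v=v/\|v\|\in G(n,1)$. For every $x\in A^{(y_0)}$,
\[
\pi_e(x+y_0 w)=(\langle x,v\rangle+r y_0)\,e,
\]
so $\pi_e(A\cap H_{y_0})$ is, via the affine bijection $\tau\mapsto\|v\|\tau+r y_0$ of $\R$, a copy of $\pi_{\hat v}(A^{(y_0)})$. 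Since $\|v\|>0$ this preserves Hausdorff dimension, so $\dim_H\pi_{\hat v}(A^{(y_0)})\le\dim_H\pi_e A<s$. Because $\hat v=f_w(e)$ and $E'\subseteq f_w(E)$, every direction in $E'$ is exceptional for $A'\subseteq A^{(y_0)}$, as required.

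The main obstacle will be condition (3): establishing that generic central projections on $G(n+1,1)$ preserve Hausdorff dimension up to the target dimension $n-1$, with appropriate care around the singularity of $f_w$ at $w$. The other ingredients are routine: the slicing theorem for (1), Frostman-style extraction of analytic subsets of prescribed Hausdorff dimension, and the elementary scaling/translation invariance of Hausdorff dimension in the verification.
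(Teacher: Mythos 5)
Your proposal follows essentially the same route as the paper: slice $A$ by a generic affine hyperplane (Mattila's slicing theorem, Theorem~\ref{thm:mattilaslicing}) to get a set of dimension $a-1$, push the exceptional lines down to the hyperplane by orthogonal projection, and check exceptionality via exactly the affine identity $\pi_e(x+y_0w)=(\langle x,v\rangle + ry_0)e$, which is the paper's dot-product computation. The one ingredient you defer---your condition (3), which you flag as the main obstacle---is precisely the paper's Lemma~\ref{lem:projectionSubspaces}, and it does not require a chart argument or the Peres--Schlag transversality machinery: lift $E$ to the cone $E^*\subseteq\R^{n+1}$ of points covered by its lines, note $\dim_H E^*=\dim_H E+1$, apply Mattila's projection theorem to $E^*$ for a.e.\ hyperplane $V$, and subtract $1$; this gives $\dim_H \pi_V(E\setminus\{V^\perp\})=\min\{t,n-1\}$ for a.e.\ $V$. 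One small inaccuracy: your justification of condition (2) (``fails only on a null set of $w$'s because $\dim_H E\le n$'') is not valid as stated, since a set of full dimension can have positive measure; either observe that $E$ is necessarily Lebesgue-null in $G(n+1,1)$ because $s\le\min\{a,1\}$ and Mattila's projection theorem makes a.e.\ direction non-exceptional, or, as the paper does, simply project $E\setminus\{V^\perp\}$, discarding the single line $\mathrm{span}(w)$, which does not change the dimension count.
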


The second pair of lemmas will be used to prove the upper bound on $T_{n, k}(a, s)$ when $k = n - 1$ and are dual to observations $(iii)$ and $(iv)$, respectively, from the previous section.

\begin{restatable}{lemma}{upperthree}
\label{lem:upper3}
    If there is an $(n+1, n, a, s, t)$-pair, with $s \leq n-1$ and $s > a-1$, then there is an $(n, n-1, a, s, t^\prime)$-pair, where $t^\prime = \min\{t, n-1\}$.
\end{restatable}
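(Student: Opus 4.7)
The plan is to pick a unit vector $u \in S^n$ and project everything in sight along the $u$-direction, thereby converting the given $(n+1, n, a, s, t)$-pair into an $(n, n-1, a, s, t')$-pair living inside $u^\perp \cong \R^n$. For hyperplanes in $\R^{n+1}$ I will use the ``slicing'' map $\phi_u \colon G(n+1, n) \setminus \{u^\perp\} \to G(n, n-1)$ defined by $\phi_u(V) = V \cap u^\perp$ (viewing $u^\perp$ as $\R^n$). Identifying $G(n+1, n) \cong \R P^n$ and $G(n, n-1) \cong \R P^{n-1}$ via $W \mapsto W^\perp$, a direct computation shows that $\phi_u$ is precisely the classical projection from $[u]$ (defined on $\R P^n \setminus \{[u]\}$): if $V = w^\perp$, then $V \cap u^\perp$ is the orthogonal complement inside $u^\perp$ of $\pi_u(w)$, so on normal vectors $\phi_u$ is represented by orthogonal projection to $u^\perp$. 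This is exactly dual to observation (iii) of the lower-bound section, which attached an extra $\R$ to a subspace rather than intersecting one away.

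The core step is two applications of the Marstrand--Mattila projection theorem to choose a good $u$. Since the hypotheses $s \leq n-1$ and $s > a - 1$ force $a < s+1 \leq n$, Mattila's theorem applied to $A$ gives $\dim_H \pi_u(A) = \min(a, n) = a$ for almost every $u$. Applied to (a Borel subset of full Hausdorff dimension of) $E$ and interpreted on $\R P^n$---equivalently, read off in a local chart around a generic point of $E$ where $\phi_u$ really is the linear projection $\R^n \to \R^{n-1}$---the theorem gives $\dim_H \phi_u(E) = \min(t, n-1)$ for almost every $u$. Choose $u$ for which both hold, and set $A' = \pi_u(A)$ and $E' = \phi_u(E \setminus \{u^\perp\})$. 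Then $A'$ is analytic (continuous image of an analytic set), non-empty, and of Hausdorff dimension $a$, while $E'$ is non-empty and has Hausdorff dimension $t' = \min(t, n-1)$, since removing at most one point from $E$ affects neither property.

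It remains to verify the dimension drop for the projections. Given $V' \in E'$, pick any $V \in E$ with $V' = V \cap u^\perp$. Then $V' \subseteq V$ and $V' \subseteq u^\perp$, so orthogonal projection to $V'$ factors through both $\pi_V$ and $\pi_u$, and hence
\[
    \pi_{V'}(A') \;=\; \pi_{V'}(\pi_u(A)) \;=\; \pi_{V'}(A) \;=\; \pi_{V'}(\pi_V(A)).
\]
Since $\pi_{V'}$ is Lipschitz, this gives $\dim_H \pi_{V'}(A') \leq \dim_H \pi_V(A) < s$, as required. The only mildly delicate step is the use of Marstrand--Mattila for $E \subseteq G(n+1, n)$, because the theorem is usually phrased for analytic subsets of Euclidean space while $E$ is only assumed non-empty and lives in a Grassmannian; I plan to dispatch this by passing to a Borel subset of $E$ of full Hausdorff dimension and reading the map $\phi_u$ in a local Euclidean chart on $\R P^n$ in which it becomes the standard linear projection, so that the classical Euclidean theorem applies verbatim. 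Everything else in the argument is routine bookkeeping.
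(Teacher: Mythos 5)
Your proposal is correct and follows essentially the same route as the paper: choose a generic hyperplane $V = u^\perp$ via Mattila's projection theorem so that $\dim_H(\pi_V A) = a$, intersect the exceptional hyperplanes with $V$ (equivalently, project their normal lines onto $V$) to get $E'$ of dimension $\min\{t, n-1\}$, and conclude from the factorization $\pi_{W\cap V} = \pi_{W \cap V} \circ \pi_V$. The only difference is cosmetic: where you handle the Grassmannian projection step by reading $\phi_u$ in a projective chart, the paper encapsulates the same fact in Lemma~\ref{lem:projectionSubspaces} via the cone construction $E^*$, which avoids the chart bookkeeping.
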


\begin{restatable}{lemma}{upperfour}
\label{lem:upper4}
    If there is an $(n+1,n, a, s, t)$-pair, with $a,s,t > 1$, then there is an $(n, n-1, a-1, s-1, t^\prime-1)$-pair for all $1< t^\prime<t$.
\end{restatable}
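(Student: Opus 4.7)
The plan is to dualize observation (iv) via slicing. Given an $(n+1,n,a,s,t)$-pair $(A',E')$ and $1<t'<t$, the goal is to choose a unit vector $e\in S^n$ and a translate $\tau\in\R$ so that slicing $A'$ by the affine hyperplane $e^\perp+\tau e$ yields (after translation) an analytic set $\tilde A_\tau\subseteq e^\perp\cong\R^n$ of Hausdorff dimension $a-1$, while the $(t'-1)$-dimensional set of $V'\in E'$ containing $e$ maps into $G(n,n-1)$ via $V'\mapsto V'\cap e^\perp$ to give the required exceptional set.

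The geometric input is a projection-slice identity: if $e\in V'$, write $V'=V\oplus\R e$ with $V=V'\cap e^\perp$. Since $\pi_{V'}$ preserves the $e$-coordinate, for $x\in e^\perp+\tau e$ we have $\pi_{V'}(x)=\pi_V(x-\tau e)+\tau e$, so
\[
\pi_{V'}\bigl(A'\cap(e^\perp+\tau e)\bigr)=\pi_V(\tilde A_\tau)+\tau e=\pi_{V'}(A')\cap(V+\tau e).
\]
In particular $\dim_H\pi_V(\tilde A_\tau)=\dim_H\bigl(\pi_{V'}(A')\cap(V+\tau e)\bigr)$.

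Three applications of Marstrand's slicing theorem then supply the needed dimension bounds. (i) Applied to $A'\subseteq\R^{n+1}$, using $a>1$: for a.e. $e$, a positive Lebesgue-measure set of $\tau$ gives $\dim_H\tilde A_\tau=a-1$. (ii) Applied to $E'\subseteq G(n+1,n)\cong\R P^n$, using $t>1$: the slicing subvariety $\{V':e\in V'\}$ corresponds under the normal-vector identification to the projective hyperplane dual to $[e]$, and for a.e. $e$ the slice $E_e:=\{V'\in E':e\in V'\}$ has $\dim_H E_e\ge t'-1$ (argued locally in affine charts of $\R P^n$). (iii) Applied to each $\pi_{V'}(A')\subseteq V'$, using $s>1$: since $\dim_H\pi_{V'}(A')<s$, for a.e. $\tau$ the slice $\pi_{V'}(A')\cap(V+\tau e)$ has dimension $<s-1$, which by the identity above means $\dim_H\pi_V(\tilde A_\tau)<s-1$.

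The main obstacle is simultaneity: (iii) is a for-each-$V'$ statement, but we need one $\tau$ that is good for sufficiently many $V'\in E_e$ at once. Fix $e$ satisfying (i) and (ii) and take a Frostman measure $\nu$ on $E_e$ of exponent close to $t'-1$. The set $B=\{(\tau,V'):\dim_H\pi_V(\tilde A_\tau)\ge s-1\}$ has every $V'$-section Lebesgue-null by (iii), so by Fubini its $\tau$-sections are $\nu$-null for a.e. $\tau$. Intersecting with the positive-measure $\tau$-set from (i) produces a $\tau$ for which $\dim_H\tilde A_\tau=a-1$ and the set of $V'\in E_e$ with $\dim_H\pi_V(\tilde A_\tau)<s-1$ still carries full $\nu$-mass, hence has dimension $\ge t'-1$. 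Passing to analytic subsets of the exact required dimensions then gives the desired $(n,n-1,a-1,s-1,t'-1)$-pair. The measurability of $B$ needed for the Fubini step, and the bookkeeping of Frostman exponents to ensure the dimension bound $\ge t'-1$ survives removal of a $\nu$-null set, are standard potential-theoretic arguments.
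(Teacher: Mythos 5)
Your proposal follows essentially the same route as the paper's proof of Lemma~\ref{lem:upper4}: slice $A$ by an affine hyperplane, keep only the exceptional hyperplanes that contain the normal direction of the slice, use the projection--slice identity $\pi_{V'}\bigl(A\cap(e^\perp+\tau e)\bigr)=\pi_{V'}(A)\cap(V+\tau e)$ together with the almost-every-translate slicing bound to get $\dim_H(\pi_V\tilde A_\tau)<s-1$ for a.e.\ $\tau$ and each fixed $V'$, and then run Fubini against a Frostman-type measure on the sliced exceptional set to find a single $\tau$ that is good for $\nu$-almost every $V'$ and also gives $\dim_H(\tilde A_\tau)=a-1$. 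Up to notation ($e,\tau$ in place of the paper's $V,x$, and a Frostman measure in place of restricted $\mathcal{H}^{t'-1}$), this is the paper's argument.

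The one genuine inaccuracy is your step (ii). Mattila's slicing theorem does not give that $\dim_H E_e\ge t'-1$ for \emph{almost every} $e$: it gives, for almost every direction in an affine chart, a \emph{positive-measure} set of translates whose slices have dimension $t''-1$, and transferring this to the dual family $\{H_e\}$ of projective hyperplanes yields only a positive-measure set of $e$ (this is exactly the content of Corollary~\ref{cor:slicingSn}, proved via the gnomonic projection plus a Fubini/measurability check). The a.e.\ version is false: if the normals of $E$ all lie in a small ball of $\mathbb{RP}^n$ (which is compatible with $t>1$ once $n\ge 2$), then $E_e=\emptyset$ for most $e$. This does not derail your proof, because your step (i) holds for almost every $e$, so a positive-measure set of good $e$ from the corrected step (ii) still allows you to fix a common $e$; but the claim should be weakened accordingly, and you should also record the standard preliminary reduction---pass to a compact subset of $E$ with $0<\mathcal{H}^{t''}(E)<\infty$ for some intermediate $t'<t''<t$---which is needed before Theorem~\ref{thm:mattilaslicing} can be invoked at all and is where the paper's auxiliary exponent $t''$ comes from; your ``Frostman bookkeeping'' then goes through as you describe.
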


Below, we will prove these four lemmas and then use them to complete the inductive arguments proving the upper bounds on $T_{n, k}(a, s)$. But first we will review some standard projection and slicing results that will be needed for the proofs of the lemmas.

\begin{remark}
\label{rmk:analytic}
In order to apply the projection and slicing results in the next subsection, we will often need to assume that for a given $(n, k, a, s, t)$-pair $(A, E)$, the set $E$ of exceptional subspaces is analytic. We will now explain why it is always safe to make this assumption. Since $A$ is analytic, it has an $F_\sigma$ subset of the same dimension. Moreover, restricting to such a subset cannot shrink the set of exceptional subspaces and hence we may always assume that $A$ is $F_\sigma$. It is then relatively straightforward to show that the set of exceptional subspaces of $A$, namely $\{V \in G(n, k) \mid \dim_H(\pi_V(A)) < s\}$, is analytic. For the rest of this paper, we will assume without further comment that all $(n, k, a, s, t)$-pairs $(A, E)$ have both $A$ and $E$ analytic.
\end{remark}

\subsection{Projection and slicing results}
 Let $V\in G(n, n-1)$, and let $G(V,1)$ be the set of one-dimensional linear subspaces of $V$. We denote the orthogonal projection of the lines in $G(n,1)\setminus\{V^\perp\}$ onto $V$ by $$\pi_V: G(n,1)\setminus\{V^\perp\} \rightarrow G(V,1).$$

As discussed in the Introduction, Mattila \cite{Mattila75} generalized Marstrand's projection theorem to higher dimensions. We will need a simple corollary of his result in the context of orthogonal projections of subspaces.
\begin{lemma}\label{lem:projectionSubspaces}
    Let $E\subseteq G(n, 1)$. For almost every $V\in G(n, n-1)$, $$\dim_H(\pi_V (E\setminus\{V^\perp\})) = \min\{\dim_H(E), n-2\}.$$
\end{lemma}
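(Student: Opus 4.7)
The strategy is to reduce the statement to Mattila's projection theorem applied to a suitable ``cone'' over $E$ inside $\R^n$. I would first lift $E$ to the unit sphere: let $\tilde E\subseteq S^{n-1}$ be the preimage of $E$ under the quotient map $S^{n-1}\to G(n,1)\cong S^{n-1}/\{\pm1\}$. Since this quotient is a local isometry, $\dim_H\tilde E=\dim_H E$, and $\tilde E$ is analytic (the paper's Remark~\ref{rmk:analytic} justifies assuming $E$ analytic). Then form
$$A=\{tv:v\in\tilde E,\ t\in[1,2]\}\subseteq\R^n.$$
The map $(v,t)\mapsto tv$ from $\tilde E\times[1,2]$ to $A$ is a bi-Lipschitz bijection, with inverse $x\mapsto(x/|x|,|x|)$ which is smooth on the annulus $1\le|x|\le2$. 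Hence $A$ is analytic and $\dim_H A=\dim_H E+1$.

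Next, apply Mattila's projection theorem to $A$ with $k=n-1$: for almost every $V\in G(n,n-1)$,
$$\dim_H(\pi_V A)=\min\{\dim_H A,\,n-1\}=\min\{\dim_H E+1,\,n-1\}.$$
The goal is then to translate this equation on $\pi_V A$ into the claimed equation on $\pi_V(E\setminus\{V^\perp\})$.

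For the upper bound, decompose $E\setminus\{V^\perp\}=\bigcup_{m\ge1}E_m$, where $E_m$ is the closed subset of lines making angle at least $1/m$ with $V^\perp$. The map $\pi_V:G(n,1)\setminus\{V^\perp\}\to G(V,1)$ is smooth, hence Lipschitz on each $E_m$, so $\dim_H\pi_V(E_m)\le\dim_H E_m\le\dim_H E$; countable stability then gives $\dim_H\pi_V(E\setminus\{V^\perp\})\le\dim_H E$, and trivially $\le n-2$. For the lower bound, write every unit vector $v\notin V^\perp$ as $\pi_V(v)=|\pi_V(v)|\,p_V(v)$ with $p_V(v)\in S^V$. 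Then
$$\pi_V A\subseteq\{s\,u:u\in p_V(\tilde E\setminus V^\perp),\ s\in[0,2]\},$$
and the right-hand side is a Lipschitz image of $p_V(\tilde E\setminus V^\perp)\times[0,2]$. Using that $S^V\to G(V,1)$ is a local isometry, this image has Hausdorff dimension at most $\dim_H\pi_V(E\setminus\{V^\perp\})+1$. Combining with Mattila's equation yields $\min\{\dim_H E+1,n-1\}\le\dim_H\pi_V(E\setminus\{V^\perp\})+1$, i.e., $\dim_H\pi_V(E\setminus\{V^\perp\})\ge\min\{\dim_H E,n-2\}$, matching the upper bound.

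The main potential obstacle is ensuring that the cone $A$ is genuinely analytic of the correct dimension and that the containment argument in the lower-bound step is legitimate across all $V$ simultaneously (not just for a.e.\ $V$). Both issues resolve cleanly: analyticity of $A$ follows from analyticity of $\tilde E$ together with continuity of the scaling map, the dimension computation for $A$ is the bi-Lipschitz statement above, and the cone containment for $\pi_V A$ is purely algebraic and therefore holds for every $V$. The only measure-zero exclusion comes from invoking Mattila's theorem itself.
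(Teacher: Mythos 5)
Your proposal is correct and takes essentially the same approach as the paper: the paper's proof also lifts $E$ to a cone in $\R^n$ (the full union $E^*$ of the lines of $E$, rather than your truncated cone $A$), applies Mattila's projection theorem to that point set, and converts back via the fact that $\pi_V E^*$ is, up to the origin, the cone over $\pi_V(E\setminus\{V^\perp\})$ so that dimensions differ by exactly $1$. Your splitting into a Lipschitz upper bound away from $V^\perp$ and a cone-containment lower bound is just a more explicit rendering of that same bookkeeping.
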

\begin{proof}
    Let $E^*\subset\R^n$ be the set of all points covered by the lines in $E$, i.e., $E^*$ is a union of lines through the origin. Note that $\dim_H(E^*) = \dim_H(E)+1$. By Mattila's projection theorem, for almost every $V$, $\dim_H(\pi_V E^*) = \min\{\dim_H(E^*), n-1\}$. Therefore, for almost every $V\in G(n,n-1)$,
$$\dim_H(\pi_V (E\setminus\{V^\perp\})) = \dim_H(\pi_V (E\setminus \{V^\perp\})^*) -1=\dim_H(\pi_V E^*) -1= \min\{\dim_H(E), n-2\},$$
    as claimed.
  \end{proof}

Mattila \cite{Mattila75} also proved the following slicing theorem.
\begin{theorem}\label{thm:mattilaslicing}
    Let $n - k \leq s \leq n$, and let $A\subseteq\R^n$ be $\mathcal{H}^s$-measurable with $0 < \mathcal{H}^s(A)< \infty$. Then, for almost all $V\in G(n,k)$,
    \begin{equation*}
        \mathcal{H}^{n-k}(\{u\in V^\perp\mid \dim_H(A\cap (V+u)) = s+ k -n\}) > 0.
    \end{equation*}
\end{theorem}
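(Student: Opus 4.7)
I would prove this classical slicing theorem of Mattila by the potential-theoretic method, establishing the upper and lower dimension bounds on the slices separately. For the upper bound, Marstrand's integral-geometric inequality gives
$$\int_{V^\perp} \mathcal{H}^{s+k-n}\!\bigl(A \cap (V+u)\bigr)\, d\mathcal{H}^{n-k}(u) \;\leq\; c_{n,k,s}\,\mathcal{H}^s(A) < \infty$$
for every $V \in G(n,k)$, so for $\mathcal{H}^{n-k}$-a.e.\ $u$ the slice has finite $(s+k-n)$-dimensional measure and in particular $\dim_H(A \cap (V+u)) \leq s+k-n$. Thus it suffices to show, for $\gamma_{n,k}$-a.e.\ $V$, that the reverse inequality holds on a set of $u$ of positive $\mathcal{H}^{n-k}$-measure.

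Assume $s > n-k$ (the boundary case $s = n-k$ is a separate easy argument), fix $t$ with $n-k < t < s$, and apply Frostman's lemma to obtain a nonzero Radon measure $\mu$ on $A$ with $\mu(B(x,r)) \leq r^s$, so that $I_t(\mu) = \int\!\!\int |x-y|^{-t}\,d\mu(x)\,d\mu(y) < \infty$. For each $V$ I would disintegrate $\mu$ along the fibres of $\pi_{V^\perp}$, normalized against $\mathcal{H}^{n-k}$ on $V^\perp$ rather than against the pushforward $\pi_{V^\perp *}\mu$, writing
$$\int F\, d\mu \;=\; \int_{V^\perp} \int F\, d\mu_{V, u}\, d\mathcal{H}^{n-k}(u),$$
where $\mu_{V, u}$ is supported on $A \cap (V+u)$ and is obtained as the weak limit of the slab averages $\epsilon^{-(n-k)}\mu\cdot\mathbf{1}_{|\pi_{V^\perp}(\cdot)-u|<\epsilon}$ as $\epsilon\to 0$.

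The heart of the proof is the global energy identity
$$\int_{G(n,k)} \int_{V^\perp} I_{t-(n-k)}(\mu_{V, u})\, d\mathcal{H}^{n-k}(u)\, d\gamma_{n,k}(V) \;=\; c_{n,k}\, I_t(\mu) \;<\; \infty,$$
which I would derive by replacing the slice by the smooth bump $\phi_\epsilon \ast \phi_\epsilon$ of width $\epsilon$ on $V^\perp$, Fubini-swapping the integrals, and evaluating the Grassmannian integral
$$\int_{G(n,k)} \phi_\epsilon\!\ast\!\phi_\epsilon\bigl(\pi_{V^\perp}(z)\bigr)\, d\gamma_{n,k}(V) \;\longrightarrow\; c_{n,k}\,|z|^{-(n-k)} \quad \text{as } \epsilon \to 0,$$
which uses the classical fact that under $\gamma_{n,k}$ the ratio $|\pi_{V^\perp}(z)|/|z|$ has density $\sim \rho^{n-k-1}$ near $\rho = 0$. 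After multiplication by $|x-y|^{-(t-(n-k))}$ and integration against $\mu\otimes\mu$, the exponents combine to produce exactly $I_t(\mu)$.

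Given finiteness of this double integral, Fubini implies that for $\gamma_{n,k}$-a.e.\ $V$ the inner integral $\int I_{t-(n-k)}(\mu_{V, u})\, d\mathcal{H}^{n-k}(u)$ is finite; since also $\int \mu_{V, u}(\R^n)\,d\mathcal{H}^{n-k}(u) = \mu(\R^n) > 0$, there is a set of $u$ of positive $\mathcal{H}^{n-k}$-measure on which $\mu_{V, u}$ is nonzero and has finite $(t-(n-k))$-energy. Frostman's converse then yields $\dim_H(A \cap (V+u)) \geq t - (n-k)$ on that set, and passing through a countable sequence $t_j \nearrow s$ gives $\dim_H(A \cap (V+u)) \geq s+k-n$, which together with the upper bound completes the proof. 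The main obstacle is the energy identity: making the $\delta$-function/slab-average limit rigorous requires justifying the $\mathcal{H}^{n-k}$-normalized disintegration despite $\pi_{V^\perp *}\mu$ being generically singular to Lebesgue, and controlling the Grassmannian integrals uniformly in $\epsilon$ so that one may interchange limit and integration using $I_t(\mu) < \infty$.
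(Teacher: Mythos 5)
The paper does not actually prove this statement: Theorem~\ref{thm:mattilaslicing} is imported directly from Mattila \cite{Mattila75}, so there is no in-paper argument to compare against. Your sketch follows the standard potential-theoretic route for this result (Eilenberg-type integral-geometric inequality for the a.e.\ upper bound on slices, a Frostman measure $\mu$ with $\mu(B(x,r))\leq r^s$, sliced measures $\mu_{V,u}$ obtained from slab averages, a Grassmannian energy estimate reducing $I_{t-(n-k)}(\mu_{V,u})$ to $I_t(\mu)$, and a limit $t_j\nearrow s$), which is essentially Mattila's own proof, so the overall architecture is sound. Two smaller points: the ``energy identity'' should only be claimed as an inequality $\int\!\!\int I_{t-(n-k)}(\mu_{V,u})\,d\mathcal{H}^{n-k}(u)\,d\gamma_{n,k}(V)\lesssim I_t(\mu)$ (obtained via Fatou in the $\epsilon\to 0$ limit), which is all you need; and when you pass to $t_j\nearrow s$ you should note that for each $j$ the energy is finite for a.e.\ $(V,u)$, while the set $\{u\mid \mu_{V,u}\neq 0\}$ does not depend on $j$, so the positive-measure set of good $u$ survives the countable intersection.

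The genuine gap is the positivity step. You assert $\int_{V^\perp}\mu_{V,u}(\R^{n+0})\,d\mathcal{H}^{n-k}(u)=\mu(\R^n)>0$ as if it came for free from the disintegration, but for slab-limit slices normalized against $\mathcal{H}^{n-k}$ this identity holds for a given $V$ exactly when $\pi_{V^\perp *}\mu$ is absolutely continuous with respect to Lebesgue measure on $V^\perp$; in general one only gets $\leq$, and if the pushforward were singular the sliced measures would vanish for a.e.\ $u$ and the lower-bound half of the argument would produce nothing. Your closing remark that $\pi_{V^\perp *}\mu$ is ``generically singular to Lebesgue'' is therefore not a technical nuisance to be smoothed over --- if it were true, the proof would collapse. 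The correct resolution, and the place where the hypothesis $s>n-k$ does real work, is that the Frostman condition with exponent $s>n-k$ (or compact support plus $t>n-k$) gives $I_{n-k}(\mu)<\infty$, and then the classical projection theorem for $(n-k)$-dimensional projections yields that for $\gamma_{n,k}$-a.e.\ $V$ the pushforward $\pi_{V^\perp *}\mu$ is absolutely continuous, indeed with $L^2$ density; only for such $V$ do the slices carry the full mass of $\mu$, giving $\mu_{V,u}\neq 0$ on a set of $u$ of positive $\mathcal{H}^{n-k}$-measure. Supplying this a.e.-$V$ absolute continuity step (and correcting the ``singular'' remark) closes the argument.
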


We will need the following corollary of Mattila's slicing theorem.
\begin{corollary}\label{cor:slicingSn}
    Let $t>1$, and let $E\subset G(n,1)$ be a compact set with $0<\mathcal{H}^t(E) < \infty$. Then, for positively many $V\in G(n, n-1)$, 
    \begin{equation}\label{eq:slicingSn}
        \dim_H(\{W\in E\mid W\subset V\}) = t-1.
    \end{equation}
\end{corollary}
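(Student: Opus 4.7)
The plan is to reduce to the affine slicing setting of Theorem~\ref{thm:mattilaslicing} via a local chart on $G(n,1)$ in which the incidence condition $W\subset V$ becomes membership in an affine hyperplane of $\R^{n-1}$. Fix a unit vector $u_0\in\R^n$, let $U:=\{W\in G(n,1):W\not\subset u_0^\perp\}$, and define the chart $\phi:U\to u_0^\perp\cong\R^{n-1}$ by sending $W=\operatorname{span}(u_0+w')$ to $w'$. This $\phi$ is a smooth bijection that is bi-Lipschitz on compact subsets of $U$. Since $G(n,1)$ is compact and is covered by finitely many such charts, after restricting $E$ to a compact subset $E'\subseteq E\cap U$ with $0<\mathcal{H}^t(E')<\infty$ I may work with $\bar E:=\phi(E')\subset\R^{n-1}$, an analytic set with $\dim_H\bar E=t$ and $0<\mathcal{H}^t(\bar E)<\infty$.

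Next I translate the condition $W\subset V$. For $V=v^\perp\in G(n,n-1)\setminus\{u_0^\perp\}$ write $v=\alpha u_0+v'$ with $v'\in u_0^\perp\setminus\{0\}$; then $W=\operatorname{span}(u_0+w')\subset V$ is equivalent to $(u_0+w')\cdot v=0$, i.e., to $w'\cdot v'=-\alpha$. Hence $\phi(G(V,1)\cap U)=H_V$, where $H_V:=\{w'\in u_0^\perp:w'\cdot v'=-\alpha\}$ is an affine hyperplane in $\R^{n-1}$, and $V\mapsto H_V$ is a smooth diffeomorphism from $G(n,n-1)\setminus\{u_0^\perp\}$ onto the space of affine hyperplanes in $\R^{n-1}$, which I identify with $G(n-1,n-2)\times\R$ via normal direction and signed distance from the origin. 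A direct Jacobian computation then shows that this diffeomorphism pushes the rotation-invariant measure on $G(n,n-1)$ forward to a measure absolutely continuous with respect to the product of the invariant measure on $G(n-1,n-2)$ and Lebesgue measure on $\R$.

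Finally, I apply Theorem~\ref{thm:mattilaslicing} to $\bar E\subset\R^{n-1}$ with $k=n-2$ and $s=t$; the hypothesis $(n-1)-k=1\leq t$ holds because $t>1$, and $t\leq n-1$ since $\bar E\subset\R^{n-1}$. The conclusion is that for a.e.\ $\bar V\in G(n-1,n-2)$, a set of $u\in\bar V^\perp$ of positive $\mathcal{H}^1$-measure satisfies $\dim_H(\bar E\cap(\bar V+u))=t-1$. A Fubini argument on $G(n-1,n-2)\times\R$ then produces a positive-measure set of affine hyperplanes $H\subset\R^{n-1}$ with $\dim_H(\bar E\cap H)=t-1$, and pulling back along $V\mapsto H_V$ yields a positive-measure set of $V\in G(n,n-1)$ with $\dim_H(\{W\in E':W\subset V\})=t-1$, hence $\dim_H(\{W\in E:W\subset V\})\geq t-1$. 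For the matching upper bound needed for equality, a standard Fubini/Eilenberg estimate of the form $\int_{G(n,n-1)}\mathcal{H}^{t-1}(\{W\in E:W\subset V\})\,dV\leq C\,\mathcal{H}^t(E)<\infty$ gives $\dim_H(\{W\in E:W\subset V\})\leq t-1$ for a.e.\ $V$. The main technical obstacle is the measure-theoretic bookkeeping in the translation step, namely verifying that ``almost every $\bar V$, positive measure of $u$'' in the affine-hyperplane space really corresponds to positive measure in $G(n,n-1)$ under $V\mapsto H_V$; this reduces to a smooth Jacobian computation but requires care with the parametrizations near the excluded hyperplane $u_0^\perp$.
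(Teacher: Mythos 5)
Your argument is correct and is essentially the paper's own proof: both pass to the gnomonic/affine chart identifying non-exceptional lines with points of $\R^{n-1}$, apply Mattila's slicing theorem (Theorem~\ref{thm:mattilaslicing}) with $k=n-2$, and use a Fubini argument to convert ``a.e.\ direction, positively many offsets'' into positively many $V\in G(n,n-1)$. The only difference is presentational: you make the bi-Lipschitz chart and the Eilenberg-type upper bound (needed to get equality, not just $\geq t-1$, for the full set $E$) explicit, where the paper handles these points via a without-loss-of-generality choice of the horizontal hyperplane and an ``immediate to see'' remark.
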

\begin{proof}

Write $\R^n=\R^{n-1}\times\R$. Without loss of generality we can assume that the dimension of the set of lines in $E$ not contained in the `horizontal' hyperplane $\R^{n-1}$ is $t$. As before, let $E^*\subset\R^n$ be the set of points covered by the lines in $E$. Let $H$ denote the horizontal affine hyperplane $H=\R^{n-1}\times\{1\}$. Let $A = E^* \cap H$,
    i.e., $A$ is the set of points at which the lines in $E$ intersect $H$ (sometimes also called the gnomonic projection of $E$). Note that $\dim_H(A) = \dim_H(E) = t$. 

By Theorem \ref{thm:mattilaslicing}, for almost every $V''\in G(n-1, n-2)$, there are positively many $(n-2)$-dimensional affine subspaces $V'\subset H$ parallel to $V''$ for which $\dim_H(A\cap V')= t-1$. It is immediate to see that, for any such $V'$, \eqref{eq:slicingSn} holds with $V=\mathrm{span}\,{V'}\in G(n,n-1)$. 

To finish the proof, we note that if $V_1'', V_1'$ and $V_2'', V_2'$ are both as in the paragraph above (i.e. $V_1'' \in G(n - 1, n - 2)$ and is a hyperplane in $H$ and $V_1' \subseteq H$ is an $(n - 2)$-dimensional affine subspace of $H$ parallel to $V_1''$, and similarly for $V_2'', V_2'$) and if $(V_1'', V_1') \neq (V_2'', V_2')$ then $\mathrm{span}(V_1') \neq \mathrm{span}(V_2')$. Hence we can apply Fubini's theorem to conclude that the set $\{V \in G(n, n - 1) \mid \dim_H(\{W \in E \mid W \subset V\}) = t - 1\}$ has positive measure. In order to see that we can apply Fubini's theorem here, note that it follows from the fact that $E$ is compact that this set is Borel measurable.
\end{proof}

\subsection{Proofs of Lemmas \ref{lem:upper2}--\ref{lem:upper4}}

In the first two proofs below, suppose that $A\subseteq\R^{n+1}$ and $E\subseteq G(n+1, 1)$ form an $(n+1, 1, a, s,t)$-pair. As discussed in Remark~\ref{rmk:analytic}, we may assume that $E$ is analytic.

\uppertwo*

\begin{proof}
Let $1 < t^\prime < t$. Note that since the dimension of $E$ is strictly greater than $t^\prime$, by passing to a subset of $E$, we may assume that $E$ is compact and $0 < \mathcal{H}^{t^\prime}(E) < \infty$. 

Let $V\in G(n+1, n)$ be any subspace such that $\dim_H(\pi_V A) = a$ and $\dim_H(E')=t^\prime-1$ for $E':=\{W\in E\mid W\subset V\}.$ Note that, by Mattila's projection theorem and Corollary \ref{cor:slicingSn}, such a $V$ must exist. Write $\R^{n+1}=\R^n\times\R$ and identify $V$ with $\R^n$. Consider $E'$ as a set in $G(n, 1)$. Then, since $\pi_W A = \pi_{W}A^\prime$ for $A':=\pi_V A$ and for every $W\in E'$, it follows that $(A^\prime, E^\prime)$ 
is an $(n, 1, a, s, t^\prime-1)$ pair.
\end{proof}

\upperone*

\begin{proof}
By Mattila's slicing theorem,  Theorem \ref{thm:mattilaslicing}, 
for almost every $V \in G(n+1, n)$, there is a $y\in V^\perp$ such that $A^\prime := A \cap (V + y)$ has dimension $a-1$. And by Lemma \ref{lem:projectionSubspaces}, for almost every $V\in G(n+1, n)$, with $E^\prime = \pi_V(E \setminus \{V^\perp\})$, $\dim_H(E^\prime) = \min\{t,n-1\}$. We therefore fix a $V$ satisfying both properties. Write $\R^{n + 1} = \R^n \times \R$ and identify $V$ with $\R^n$.

To complete the proof, we show that $(A',E')$ is an $(n,1,a-1,s,t')$-pair, i.e., that $\dim_H(\pi_{W} A^\prime) < s$ for all $W \in E^\prime$. Let $W^*\subset\R^n$ be the set of points covered by $W$, and let $E^*\subset \R^{n+1}$ be the set of points covered by the lines of $E$. We need to show that for every $u_0\in W^*$, $\dim_H(u_0\cdot A')<s$.

Note that $A^\prime = \{x\in\R^n \mid (x, y) \in A\}$, and if $u_0\in W^*$ then there is some $u_1$ such that $u = (u_0, u_1) \in E^*$. Therefore $u\cdot (A \cap (V+y)) = \{(u_0, u_1)\cdot (x, y) \mid (x, y) \in A\} = u_0\cdot A^\prime + u_1\cdot y$ is a translate of $u_0\cdot A'$ and thus $\dim_H(u_0\cdot A') = \dim_H(u\cdot (A\cap (V +y))) \leq \dim_H(u\cdot A) < s$. 
\end{proof}

In the next two proofs, suppose that $A\subseteq \R^{n + 1}$ and $E \subseteq G(n + 1, n)$ form an $(n + 1, n, a, s, t)$-pair. Once again, we may assume $E$ is analytic.

\upperthree*

\begin{proof}[Proof of Lemma \ref{lem:upper3}]
We first note that our assumptions on $s$ and $a$ imply that $a \leq n$.
 By Mattila's projection theorem, $A^\prime := \pi_V A$ has dimension $a$ for almost every $V \in G(n+1, n)$. Moreover, by Lemma \ref{lem:projectionSubspaces} applied to the perpendiculars of $E$, for almost every $V \in G(n+1, n)$, the set $E^\prime := \{V\cap W \mid W \in E\} \subseteq G(V, n-1)$ satisfies $\dim_H(E^\prime) = \min\{t, n-1\}$. We therefore fix a $V\in G(n+1,n)$ satisfying both properties, and as usual, identify $V$ with $\R^n$.
 Considering $E'$ as a set in $G(n,n-1)$, since  $\dim_H(\pi_{W\cap V} A^\prime) \leq \dim_H(\pi_W A)$ for $A'= \pi_V A$ and for every $W\in E$, therefore $(A',E')$ is an $(n,n-1,a,s,t')$ pair.
\end{proof}

\upperfour*

\begin{proof}[Proof of Lemma \ref{lem:upper4}]
Fix $1 < t^\prime < t$ and let $t^\prime < t'' < t$. Note that since the dimension of $E$ is strictly greater than $t''$, by passing to a subset of $E$ we may assume that $E$ is compact and that $0 < \mathcal{H}^{t''}(E) < \infty$. By Corollary \ref{cor:slicingSn}, there are positively many $V\in G(n+1, n)$ such that 
\begin{center}
    $E^\prime := \{W \in E \mid W^\perp \subset V\}$
\end{center}
has dimension $t''-1$. By Mattila's slicing theorem, Theorem \ref{thm:mattilaslicing}, for almost every $V\in G(n+1,n)$, for positively many $x \in V^\perp$, the set $A_x = A \cap (V + x)$ satisfies
\begin{equation*}\label{eq:dimensionAx}
    \dim_H(A_x) = a -1.
\end{equation*}
Thus we may fix a hyperplane $V\in G(n+1,n)$ satisfying both properties.

Let $\mu$ be the restriction of $\mathcal{H}^{t' - 1}$ onto an analytic subset of $E^\prime$ of positive and finite $\mathcal{H}^{t' - 1}$  measure (this is possible because $t' - 1$ is strictly less than the dimension of $E^\prime$, which is $t'' - 1$). Note that by Marstrand's vertical slicing theorem \cite{Marstrand54}, generalized to arbitrary dimension by Mattila \cite{Mattila75}, for every $W\in E'$, the set $\pi_W A_x=(\pi_WA)\cap (V+x)$ has dimension at most $\dim_H(\pi_W A)-1 < s - 1$ for a.e. $x\in V^\perp$. Hence by Fubini's theorem (applied to the product of $\mu$ and the Lebesgue measure on $V^\perp$), for almost every $x \in V^\perp$, 
\begin{equation}\label{eq:dimensionSliceProjection}
    \dim_H(\pi_{W} A_x) \leq s-1
\end{equation}
holds for $\mu$-almost every $W \in E'$. Fix one such $x$ and let $E''$ be an analytic subset of $E'$ of dimension $t' - 1$ such that for every $W \in E''$, \eqref{eq:dimensionSliceProjection} holds. Then $A_x$ and $E''$ form an $(n, n - 1, a - 1, s - 1, t')$-pair and the conclusion follows.
\end{proof}

\subsection{Proof of upper bound when $k = 1$}
  Recall that if $k=1$, our aim is to prove that for any $n\geq 2$, $0<a\leq n$ and $\max\{0,a-n+1\}<s\leq \min\{a,1\}$,
      \begin{center}
        $T_{n,1}(a,s) \le S_{n,1}(a,s) = n - 2 -\floor{a-s} + \max\{0, \floor{a-s}+2s-a\}$.
    \end{center}
We prove this by induction on $n$. The base case, $n = 2$, follows immediately from Theorem \ref{thm:RenWang}. We now assume that it holds up to $n$ and prove that it also holds for $n + 1$.

Let $A\subseteq\R^{n+1}$ be an analytic set with $\dim_H(A) = a$, $a - n < s \leq \min\{a, 1\}$, and let $E = \{V\in G(n+1,1)\mid \dim_H(\pi_V A) < s\}$. Let $t= \dim_H(E)$. That is, $(A, E)$ is an $(n+1, 1, a, s, t)$-pair.\\

\noindent \textbf{Case 1:} Assume that 
    either $s < a-1$, or $s = a-1$ and $a < 2$.
    By Lemma \ref{lem:upper1}, there is an $(n, 1, a-1, s, t^\prime)$-pair,
    where 
        $t^\prime := \min\{t, n-1\}$.
    By our induction hypothesis,
    \begin{align*}
        t^\prime &\leq S_{n,1}(a-1,s)= S_{n+1,1}(a, s).
    \end{align*}
    
    We first show that the case $t^\prime = n-1$ leads to a contradiction. To see this, assume $t^\prime = n-1$, we will then have
    \begin{equation}\label{eq:thmlines1}
        \floor{a-s}\leq \max\{0, \floor{a-s} + 2s-a\}.
    \end{equation}
    Since $\floor{a-s} \geq 1$, therefore $a\leq 2s$, and so $a \leq 2$. If $s < a-1$, then $a > 2$, i.e., we have a contradiction. If $s = a-1$ and $a < 2$, we again have a contradiction since \eqref{eq:thmlines1} fails. 
    
    Therefore, $t^\prime = t$, and the conclusion follows. \\

    \noindent \textbf{Case 2:} Assume that $s = 1$ 
    and $a= 2$. It suffices to show that
        $t \leq n-1$.
    This follows from Falconer's bound, \eqref{eq:FalconerBound}. \\

    \noindent \textbf{Case 3:} Assume that $s > a-1$. If $t \leq 1$, then we are done, since $1 \leq S_{n+1,1}(a, s)$ when $s >a-1$. Otherwise, by Lemma \ref{lem:upper2}, there is an $(n, 1, a, s, t^\prime-1)$-pair for every $1<t^\prime <t$. By our induction hypothesis,
   \begin{align*}
        t^\prime-1 &\leq S_{n,1}(a,s)\\
        &= S_{n+1,1}(a,s) -1.
    \end{align*}
    From the definition of $T_{n,k}$, letting $t^\prime$ go to $t$ concludes the proof.

\subsection{Proof of upper bound when $k=n-1$}
Our aim is to prove that if $n\geq 2$, $0 < a \leq n$ and $a - 1 < s \leq \min\{a, n-1\}$, then
\begin{equation*}
    T_{n, n-1}(a, s)\le S_{n,n-1}(a,s) = \ceil{s}-1+ \max\{0, 2s-a -\ceil{s} +1\}.
\end{equation*}

    We prove this by induction on $n$. The base case, $n = 2$ is due to Ren and Wang \cite{ren2023}. We assume that it holds up to $n$. 

    Let $(A,E)$ be an $(n+1,n,a,s,t)$ pair. \\

    \noindent \textbf{Case 1:} Assume that 
    either $s \leq n -1$ and $s < a$, or $s < n -1$ and $s \leq a$. By Lemma \ref{lem:upper3}, there is an $(n, n-1, a, s, t^\prime)$-pair, where $t^\prime = \min\{t,n-1\}$. By our induction hypothesis, 
    $$    t^\prime \leq S_{n,n-1}(a,s)= S_{n+1, n}(a, s).$$
    Suppose that $t^\prime = n-1$. If $S_{n,n-1}(a,s)=\ceil{s}-1$, we have an immediate contradiction. Therefore, $S_{n,n-1}(a,s) = 2s-a$, and so $\frac{a + n-1}{2}\leq s$. If  $s\leq n-1$  and $s < a$, we must have $a > n -1$, but this is a contradiction. Similarly, if $s < n-1$ and $s \leq a$, we must have $a< n-1$, 
    which is a contradiction. Therefore $t^\prime = t$, and the conclusion follows.\\

    \noindent \textbf{Case 2:} Assume that $s = n-1 = a$. It suffices to show that $t\leq S_{n+1,n,}(n-1,n-1) = n -1$. This follows directly from Mattila's bound \eqref{eq:MattilaBound}.\\

    \noindent \textbf{Case 3:} Assume that $s > n-1$. If $t \leq 1$, then $t \leq S_{n+1,n}(a,s)$ trivially, so we may assume that $t > 1$. By Lemma \ref{lem:upper4}, for every $1<t^\prime < t$ there is an $(n, n-1, a-1, s-1, t^\prime-1)$-pair. Therefore, by our induction hypothesis,
    \begin{align*}
        t^\prime - 1 &\leq S_{n, n-1}(a-1, s-1)= S_{n+1,n}(a,s) - 1.
    \end{align*}
    From the definition of $T_{n,k}$, letting $t^\prime$ go to $t$ concludes the proof. \qed

\bibliographystyle{plain}
\bibliography{SQuaREs.bib}

\end{document}